\newtheorem{theorem}{Theorem}[section]
\newtheorem{remark}{Remark}[section]
\newtheorem{corollary}{Corollary}[theorem]
\theoremstyle{definition}
\newtheorem{definition}{Definition}[section]
\newtheorem{example}{Example}[section]
\title{Primal Topological Spaces}
\author{Santanu Acharjee$^1$, Murad \"{O}zko\c{c}$^2$,  Faical Yacine Issaka$^3$\\
$^1$Department of Mathematics\\
Gauhati University\\
Guwahati-781014, Assam, India\\
$^2$Mu\u{g}la S{\i}tk{\i} Ko\c{c}man University\\
Faculty of Science\\ Department of Mathematics \\ 48000,
Mente\c{s}e-Mu\u{g}la, Turkey\\
$^3$Mu\u{g}la S{\i}tk{\i} Ko\c{c}man University\\
Graduate School of Applied and Natural Sciences\\ Mathematics, 48000, Mente\c{s}e-Mu\u{g}la, Turkey\\
e-mails: $^1$sacharjee326@gmail.com, $^2$murad.ozkoc@mu.edu.tr\\
$^3$faicalyacine@gmail.com\\
}
\date{}
\begin{document}
\maketitle

\begin{abstract}
The purpose of this paper is to introduce a new structure `primal'. Primal is dual to grill.  Like ideal,  dual of filter, this new structure also generates a  new topology named `primal topology'.  We introduce a new operator using primal, which satisfies Kuratowski's closure axioms. Mainly, we prove that primal topology is finer than the topology of a primal topological space. We provide structure of base of primal topology and prove other fundamental results related to this new structure. \\  

{\bf 2020 AMS Classifications:} 54A05;  54B99; 94A60.\\

{\bf Keywords:} Primal, grill, primal topological space, topological cryptography, Kuratowski's closure, base.

\end{abstract}

\section{Introduction}
Topology is one of the branches of mathematics which is highly applicable \cite{Wl}. Due to its applicability in both science and social science, several new ideas have been developed in topology  with classical structures. Kuratowski introduced the idea of ideal from filter \cite{Ku}. One may consider ideal as the dual of filter. Similarly, one of the classical structures of topology is grill. The definition of grill was introduced by Choquet \cite{ch} in the year 1947.\\

Later, Thron \cite{wj} introduced proximity structures in grills. In 1977, Chattopadhyay and Thron \cite{kc} extended ideas of closure space with grills. Moreover, Chattopadhyay et al. \cite{ch1} extended ideas of grills to study merotopic spaces. Since then, the structure of grill has been highly used in topology. Roy and Mukherjee \cite{b1,b2,b3} studied various topological properties with grills. Operators based on grills were introduced by Roy et al. \cite{b4}, Nasef and  Azzam \cite{Na}, and many others.   Modak \cite{m1, m2} studied grill-filter space and related properties. Hosny \cite{ro} studied grill structures in $\delta$-set. Cluster systems via grills were studied in \cite{rd}. Moreover, various advanced results with grills were studied in \cite{ az, ta1, ta2, mn, lo} and many others. But, it is important to note that the literature on grill structures are less comparison to filter, ideal, etc. Moreover, interdisciplinary applications of grills are rare to be found.  Janković and  Hamlett \cite{Ja} introduced a new topological space using ideal from a given topology of a topological space. Since, primal is also dual structure of grill, thus we are motivated by Janković and  Hamlett \cite{Ja} to introduce a new topology using primal.\\ 

In this paper, we  introduce  the dual structure of grill named `primal'. Moreover, we introduce a new topology named ``primal topology" and study several fundamental properties. In 1990, Isham \cite{Is} connected general topology with quantum topology. In this paper, we also  observe scopes of some quantum behavior of a new structure   $(. )^\diamond$ introduced in Definition 3.3. Similarly, uncertain behaviors can be observed in Theorem 3.10 and results next to it. As of our knowledge is concerned,  this paper is the first paper in the literature of topology to introduce dual structure of grill, thus we are  confined to study fundamental results only related to primal.\\

\section{Preliminaries}

Throughout this present paper, $(X,\tau)$ and $(Y,\sigma)$ (briefly, $X$ and $Y$) represent topological spaces unless otherwise stated. For any subset $A$ of a space $X$, $cl(A)$ and $int(A)$ denote  closure and t interior of $A$, respectively. The powerset of a set $X$ will be denoted by $2^X.$ The family of all open neighborhoods of a point $x$ of $X$ is denoted by $\tau(x).$ Now, we procure the following definition of grill. \\

\begin{definition} \cite{ch}
A family $\mathcal{G}$ of $2^X$ is called a grill  on $X$ if $\mathcal{G}$ satisfies the following conditions:\\

$(1)$ $\emptyset\notin \mathcal{G},$\\

$(2)$ if $A\in\mathcal{G}$ and $A\subseteq B,$ then $B\in \mathcal{G}$,\\

$(3)$ if $A\cup B\in \mathcal{G},$ then $A\in\mathcal{G}$ or $B\in\mathcal{G}.$\\
\end{definition}

Since, primal is dual to grill, thus we skip to add other results on grill, otherwise it will unnecessarily increase the volume of this paper. 

\section{Primal and a new topological structure}

In this section, we introduce a new structure in topology. This new structure is called primal. Primal is a dual structure of grill.  Now, we have the following definition of primal.\\

\begin{definition}
Let $X$ be a nonempty set. A collection $\mathcal{P}\subseteq   2^X$  is called a primal on $X$ if it satisfies the following conditions:\\

(i) $X \notin \mathcal{P}$,\\

(ii) if $A\in \mathcal{P}$ and $B\subseteq A$, then $B\in \mathcal{P}$,\\

(iii) if $A\cap B\in \mathcal{P}$, then $A\in \mathcal{P}$ or $B\in \mathcal{P}$.\\
\end{definition}

\begin{corollary}
Let $X$ be a nonempty set. A collection $\mathcal{P}\subseteq   2^X$  is a primal on $X$ if and only if it satisfies the following conditions:\\

(i) $X \notin \mathcal{P}$,\\

(ii) if $B\notin \mathcal{P}$ and $B\subseteq A$, then $A\notin \mathcal{P}$,\\

(iii) if $A\notin \mathcal{P}$ and $B\notin \mathcal{P},$ then $A\cap B\notin \mathcal{P}.$
\end{corollary}

Now, let us consider the following examples.

\begin{example}
Let $X=\{a,b\}.$ Then, all primals defined on $X$ are $\mathcal{P}_1=\emptyset,$ $\mathcal{P}_2=\{\emptyset,\{a\}\},$ $\mathcal{P}_3=\{\emptyset,\{b\}\}$ and $\mathcal{P}_4=\{\emptyset,\{a\},\{b\}\}.$
\end{example}

\begin{example}
Let $X$ be a nonempty set. It is obvious that the family $\mathcal{P}=2^X\setminus \{X\}$ is a primal on $X,$ where $2^X$ denotes the powerset of $X.$
\end{example}

\begin{corollary}
It is not difficult to see that $2^n$ primals can be written on a set with $n$ elements.
\end{corollary}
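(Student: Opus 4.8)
The plan is to set up a bijection between the primals on $X$ and the arbitrary subsets of $X$, which immediately yields the count $2^n$. The guiding observation is that passing to complements turns the primal axioms into the defining conditions of a filter, and that in a finite universe every such filter is principal, hence recorded by a single subset of $X$.

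First I would invoke the dual form of the axioms established in the corollary above. For a collection $\mathcal{P} \subseteq 2^X$, set $\mathcal{F} := 2^X \setminus \mathcal{P}$. Translating conditions (i)--(iii) of that corollary into statements about $\mathcal{F}$ shows that $\mathcal{P}$ is a primal if and only if $\mathcal{F}$ satisfies: $X \in \mathcal{F}$; if $B \in \mathcal{F}$ and $B \subseteq A$ then $A \in \mathcal{F}$; and if $A, B \in \mathcal{F}$ then $A \cap B \in \mathcal{F}$. Thus counting primals is exactly the same as counting families $\mathcal{F} \subseteq 2^X$ that are upward closed, closed under intersection, and contain $X$.

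Next I would exploit finiteness. Because $X$ has $n$ elements, $2^X$ is finite, so any such $\mathcal{F}$ is a finite family, and $\mathcal{F} \neq \emptyset$ since $X \in \mathcal{F}$. The total intersection $m := \bigcap_{F \in \mathcal{F}} F$ is then a finite intersection of members of $\mathcal{F}$, hence $m \in \mathcal{F}$ by closure under intersection. Upward closure forces every superset of $m$ into $\mathcal{F}$, while minimality of $m$ excludes every set that fails to contain $m$; therefore $\mathcal{F} = \{A \subseteq X : m \subseteq A\}$ is the principal filter generated by $m$. Conversely, for each $m \subseteq X$ the family $\{A \subseteq X : m \subseteq A\}$ plainly meets the three conditions, so it arises from a primal.

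Finally I would observe that the map $\mathcal{F} \mapsto m$ is a bijection onto $2^X$: the subset $m$ is recovered as the least element of $\mathcal{F}$, so distinct $m$ give distinct $\mathcal{F}$, and every $m \subseteq X$ is attained. Composing with complementation, the primals correspond one-to-one with the subsets of $X$, of which there are precisely $2^n$. The main obstacle is the second step, namely verifying that finiteness forces $\mathcal{F}$ to be principal; this is a routine consequence of intersection-closure in a finite lattice, but it is worth recording the two extreme cases explicitly, $m = X$ (returning the primal $2^X \setminus \{X\}$) and $m = \emptyset$ (returning the empty primal), to confirm that both endpoints of the correspondence are correctly included.
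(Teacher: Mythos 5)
Your argument is correct, and since the paper states this corollary with no proof at all (``it is not difficult to see''), you have supplied a complete justification where the authors offer none. Your dictionary is exactly the paper's own Corollary 3.1: the non-members $\mathcal{F}=2^X\setminus\mathcal{P}$ form a family containing $X$, upward closed, and closed under binary intersection, and in a finite universe such a family is the principal upper set of $m=\bigcap_{F\in\mathcal{F}}F$, giving the bijection $\mathcal{P}\leftrightarrow m$ and hence the count $2^n$. One caution about your own phrasing: $\mathcal{F}$ is not a filter in the usual sense, since it is permitted to contain $\emptyset$ (indeed to equal all of $2^X$), so ``every filter on a finite set is principal'' should not be cited as a known fact but proved as you in fact do, by observing that the total intersection is a finite intersection and hence lies in $\mathcal{F}$; your explicit treatment of the endpoints $m=\emptyset$ (the empty primal) and $m=X$ (the primal $2^X\setminus\{X\}$) covers the degenerate cases. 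As a sanity check, your bijection reproduces the paper's Example 3.1: for $n=2$ the four subsets $m\in\{\emptyset,\{a\},\{b\},X\}$ yield exactly the four listed primals $\mathcal{P}_1,\dots,\mathcal{P}_4$.
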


\begin{theorem}
Let $\mathcal{G}$ be a grill on $X$. Then, $\{A| A^{c}\in \mathcal{G}\}$ is a primal on $X$.
\end{theorem}

\begin{proof}
Let $\mathcal{G}$ be a grill of $X$ and $\mathcal{P}=\{A| A^{c}\in \mathcal{G}\}$. Then, we are to show that $\mathcal{P}$ is a primal. \\

$(i)$ Since $\emptyset \notin \mathcal{G}$, thus $X \notin \mathcal{P}$.\\

$(ii)$ Let $A\in \mathcal{P}$ and $B\subseteq A$. Then, $A^{c}\subseteq B^{c}$. Since $A^{c}\in \mathcal{G}$, thus $B^{c}\in \mathcal{G}$. Hence, $B\in \mathcal{P}$.\\

$(iii)$ Let $A\cap B\in \mathcal{P}$. Then, $A^{c}\cup B^{c}=(A\cap B)^{c}\in \mathcal{G}$. Therefore, we get $A^{c} \in \mathcal{G}$ or $B^{c} \in \mathcal{G}.$ Hence, $A \in \mathcal{P}$ or $B \in \mathcal{P}$. Thus, $\mathcal{P}$ is a primal on $X.$
\end{proof}

\begin{theorem}
Let $\mathcal{P}$ and $\mathcal{Q}$ be two primals on $X$. Then, $\mathcal{P\cup Q}$ is a primal on $X$.
\end{theorem}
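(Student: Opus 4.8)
The plan is to verify the three defining conditions of Definition 3.1 for the family $\mathcal{P}\cup\mathcal{Q}$ one at a time, exploiting the basic fact that a set belongs to the union precisely when it belongs to at least one of the two primals. For condition (i), since $\mathcal{P}$ and $\mathcal{Q}$ are primals we have $X\notin\mathcal{P}$ and $X\notin\mathcal{Q}$, whence $X\notin\mathcal{P}\cup\mathcal{Q}$ with no further work. This disposes of the first axiom immediately.

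Next I would treat the downward-closure condition (ii). Taking any $A\in\mathcal{P}\cup\mathcal{Q}$ and any $B\subseteq A$, the set $A$ lies in $\mathcal{P}$ or in $\mathcal{Q}$; applying condition (ii) of whichever primal contains $A$ produces $B$ in that same primal, so $B\in\mathcal{P}\cup\mathcal{Q}$. For the remaining condition (iii), I would assume $A\cap B\in\mathcal{P}\cup\mathcal{Q}$ and split into two cases according to whether $A\cap B\in\mathcal{P}$ or $A\cap B\in\mathcal{Q}$. In the first case, condition (iii) for $\mathcal{P}$ gives $A\in\mathcal{P}$ or $B\in\mathcal{P}$; the second case is entirely symmetric. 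In every branch the selected set lands in $\mathcal{P}\cup\mathcal{Q}$, which is exactly what is required.

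I do not expect a genuine obstacle, as the argument is routine case analysis. The only point worth flagging is a structural one: none of the three primal conditions forces the sets under consideration to be drawn from a single primal, and since each axiom's conclusion asks only for membership in the union, a case split on which of $\mathcal{P}$, $\mathcal{Q}$ contains the relevant set always suffices. This is precisely why the union behaves well here, in contrast to a property like finite-intersection stability, where mixing members from two different families could fail; it is therefore reassuring (though not strictly necessary for the proof) to note that the ``hard'' axiom (iii) reduces to a disjunction that is compatible with taking unions.
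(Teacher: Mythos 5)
Your proposal is correct and follows essentially the same route as the paper: verify the three primal axioms for $\mathcal{P}\cup\mathcal{Q}$ directly, using a case split on which of the two primals contains the relevant set, exactly as the paper's proof does. No discrepancies to report.
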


\begin{proof}
$(i)$ Since $\mathcal{P}$ and $\mathcal{Q}$ be two primals on $X$, thus we get $X\notin \mathcal{P}$ and  $X\notin \mathcal{Q}$. Hence, $ X\notin \mathcal{P\cup Q}$.\\

$(ii)$ Let $A\in \mathcal{P\cup Q}$ and $B\subseteq A$. Then, $A\in \mathcal{P}$ or $A\in \mathcal{Q}$.  Then, $B\in \mathcal{P}$ or $B\in \mathcal{Q}$. It yields  $B\in \mathcal{P\cup Q}$.\\

$(iii)$ Let $A\cap B\in \mathcal{P\cup Q}$. Then, $A\cap B\in \mathcal{P}$ or $A\cap B\in \mathcal{Q}$. If $A\cap B\in \mathcal{P}$, then either $A \in \mathcal{P}$ or $ B\in \mathcal{P}$. Again,  if $A\cap B\in \mathcal{Q}$, then  either $B\in \mathcal{Q}$ or $ B\in \mathcal{Q}$. Then, obviously $A \in \mathcal{P\cup Q}$ or $ B\in \mathcal{P\cup Q}$.\\ 

Thus, $\mathcal{P\cup Q}$ is a primal on $X$.
\end{proof}

\begin{remark}
The intersection of two primals defined on $X$ need not be a primal on $X$ as shown by the following example.
\end{remark}

\begin{example}
Let $X=\{a,b\}$ with primals  $\mathcal{P} = \{\emptyset, \{a\}\}$ and $\mathcal{Q} = \{\emptyset, \{b\}\}$ on $X$. Then, $\mathcal{P\cap Q}=\{\emptyset\}$ is not a primal on $X$ since $\{a\}\cap\{b\}=\emptyset\in \mathcal{P\cap Q}$, but neither  $\{a\}\in\mathcal{P\cap Q}$ nor $\{b\}\in\mathcal{P\cap Q}.$
\end{example}

\begin{remark}
The family of sets formed by the intersection (union) of the elements of two primals need not be a primal on $X$ as shown by the following example.
\end{remark}

\begin{example}
Let $X=\{a,b\}$ with primals   $\mathcal{P} = \{\emptyset, \{a\}\}$ and $\mathcal{Q} = \{\emptyset, \{b\} \}$ on $X.$ 

$(a)$ The family $\mathcal{R}=\{P\cap Q|(P\in \mathcal{P})(Q\in\mathcal{Q})\}=\{\emptyset\}$ is not a primal on $X$ since $\{a\}\cap\{b\}=\emptyset\in \mathcal{R}$ but neither  $\{a\}\in\mathcal{R}$ nor $\{b\}\in\mathcal{R}.$

$(b)$ The family $\mathcal{S}=\{P\cup Q|(P\in \mathcal{P})(Q\in\mathcal{Q})\}=2^X$ is not a primal on $X$, since $X\in\mathcal{S}.$
\end{example}

\begin{definition}
A topological space $(X,\tau)$ with a primal $\mathcal{P}$ on $X$ is called a primal topological space and denoted by $(X,\tau,\mathcal{P}).$
\end{definition}

\begin{example}
Let $(X, \tau)$ be a topological space, where $X=\{a,b\}$, $\tau=\{\emptyset, \{a\}, X\}$ and    $\mathcal{P} = \{\emptyset, \{a\}\}$ primal on $X.$ Then, $(X, \tau, \mathcal{P})$ is a primal topological space. 
\end{example}

Now, we define a new kind of topological operator based on primal. The new structure is given below.


\begin{definition}
Let $(X,\tau,\mathcal{P})$ be a primal topological space. We consider a map $(\cdot)^{\diamond}: 2^X \rightarrow 2^X$ as $A^{\diamond}(X,\tau,\mathcal{P})=\{x\in X : (\forall U\in \tau (x))(A^c\cup U^c \in \mathcal{P})\}$ for any subset $A$ of $X.$ 
We can also write $A_{\mathcal{P}}^{\diamond}$ as $A^{\diamond}(X,\tau,\mathcal{P})$ to specify the primal as per our requirements.  
\end{definition}


\begin{remark}
Let $(X,\tau,\mathcal{P})$ be a primal topological space. For any subset $A$ of $X,$ $A^{\diamond}\subseteq A$ or $A\subseteq A^{\diamond}$ need not be true as shown by the following examples.
\end{remark}

\begin{example}
Let $X=\{a,b,c\}$ and $\tau=\{\emptyset,\{a\}, \{b\}, \{a,b\}, X\}$. We consider a primal  $\mathcal{P} = \{\emptyset, \{a\}, \{b\}, \{a,b\}\}$ on $X$. Now, if $A=\{a,b\},$ then  $ \emptyset=A^{\diamond} \subseteq A=\{a,b\}.$
\end{example}

\begin{example}
Let $X=\{a,b,c\}$ with the discrete topology. We consider a primal $\mathcal{P}=\{\emptyset,\{a\}, \{b\},\{a,b\}\}$ on $X.$ Now, if $A=\{c\},$ then $\{c\}=A\subseteq A^{\diamond}=X.$
\end{example}

\begin{theorem} \label{1}
Let $(X,\tau,\mathcal{P})$ be a primal topological space. Then, the following statements hold for any two subsets of $A$ and $B$ of $X.$\\

$(i)$ If $A^c\in \tau,$ then $ A^{\diamond}\subseteq A,$\\

$(ii)$ $\emptyset^{\diamond}= \emptyset$,\\

$(iii)$ $cl(A^{\diamond})=A^{\diamond},$ \\

$(iv)$ $(A^{\diamond})^{\diamond}\subseteq A^{\diamond},$ \\

$(v)$ If $A\subseteq B$, then $A^{\diamond}\subseteq B^{\diamond}$,\\

$(vi)$ $A^{\diamond}\cup B^{\diamond}= (A\cup B)^{\diamond},$\\


$(vii)$ $ (A\cap B)^{\diamond}\subseteq A^{\diamond}\cap B^{\diamond}.$\\
\end{theorem}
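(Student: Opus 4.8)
The plan is to treat the seven claims not as independent computations but as a small dependency tree, proving the structural facts first and then harvesting the rest. The single organizing observation is that, for any open neighborhood $U$ of $x$, one has $A^c\cup U^c=(A\cap U)^c$, so that $x\in A^{\diamond}$ exactly when $(A\cap U)^c\in\mathcal{P}$ for every $U\in\tau(x)$; consequently $x\notin A^{\diamond}$ means there is a single witnessing neighborhood $U\in\tau(x)$ with $A^c\cup U^c\notin\mathcal{P}$. The contrapositive forms of the primal axioms, recorded in the Corollary, will be exactly what is needed: part (ii) of that Corollary (a superset of a non-member is a non-member) and part (iii) (the intersection of two non-members is a non-member). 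I would prove the items in the order (ii), (v), (vii), (vi), (i), (iii), (iv), since several later parts are immediate consequences of earlier ones.

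First I would dispatch the direct items. For (ii), taking $U=X\in\tau(x)$ forces $\emptyset^{c}\cup X^{c}=X$, and since $X\notin\mathcal{P}$ no point lies in $\emptyset^{\diamond}$. For (v), monotonicity follows because $A\subseteq B$ gives $B^c\subseteq A^c$, hence $B^c\cup U^c\subseteq A^c\cup U^c$; if $x\in A^{\diamond}$ then $A^c\cup U^c\in\mathcal{P}$, and Definition's axiom (ii) of primal promotes the subset into $\mathcal{P}$ as well, so $x\in B^{\diamond}$. Part (vii) is then nothing but (v) applied to $A\cap B\subseteq A$ and $A\cap B\subseteq B$. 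For (i), I would argue contrapositively: if $x\notin A$ and $A^c\in\tau$, then $A^c$ itself is an open neighborhood of $x$ with $A^c\cup(A^c)^c=X\notin\mathcal{P}$, witnessing $x\notin A^{\diamond}$.

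The main obstacle is the reverse inclusion in (vi), namely $(A\cup B)^{\diamond}\subseteq A^{\diamond}\cup B^{\diamond}$ (the forward inclusion is immediate from (v)). Here I would again work contrapositively. Suppose $x\notin A^{\diamond}$ and $x\notin B^{\diamond}$, with witnessing neighborhoods $U,V\in\tau(x)$ satisfying $A^c\cup U^c\notin\mathcal{P}$ and $B^c\cup V^c\notin\mathcal{P}$. The key device is to pass to the common refinement $W=U\cap V\in\tau(x)$. Since $W\subseteq U$ gives $U^c\subseteq W^c$, we have $A^c\cup U^c\subseteq A^c\cup W^c$, and Corollary (ii) upgrades $A^c\cup U^c\notin\mathcal{P}$ to $A^c\cup W^c\notin\mathcal{P}$; symmetrically $B^c\cup W^c\notin\mathcal{P}$. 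Now Corollary (iii) yields $(A^c\cup W^c)\cap(B^c\cup W^c)\notin\mathcal{P}$, and the De Morgan/distributive identity $(A^c\cup W^c)\cap(B^c\cup W^c)=(A^c\cap B^c)\cup W^c=(A\cup B)^c\cup W^c$ turns this into the statement that $W$ witnesses $x\notin(A\cup B)^{\diamond}$. I expect the neighborhood-refinement step together with the correct invocation of both contrapositive axioms to be the delicate point, since one must manufacture a single neighborhood handling both sets simultaneously.

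Finally I would close out (iii) and (iv). For (iii) it suffices to show $cl(A^{\diamond})\subseteq A^{\diamond}$. Given $x\in cl(A^{\diamond})$ and any $U\in\tau(x)$, openness of $U$ makes $U\cap A^{\diamond}\neq\emptyset$; choosing $y$ in this intersection, $U$ is an open neighborhood of $y$, so $y\in A^{\diamond}$ delivers $A^c\cup U^c\in\mathcal{P}$, and as $U$ was arbitrary, $x\in A^{\diamond}$. Thus $A^{\diamond}$ is closed. Item (iv) is then free: (iii) gives $(A^{\diamond})^c\in\tau$, so applying (i) with $A^{\diamond}$ in place of $A$ produces $(A^{\diamond})^{\diamond}\subseteq A^{\diamond}$.
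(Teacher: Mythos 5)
Your proof is correct and follows essentially the same route as the paper's: monotonicity via downward closure of $\mathcal{P}$, the common refinement $W=U\cap V$ together with the contrapositive primal axioms for the reverse inclusion in (vi), the standard closure argument for (iii), and (iv) as a consequence of (i) and (iii). The only differences are cosmetic (your ordering of the parts, proving (ii) directly rather than from (i), and phrasing (i) contrapositively rather than by contradiction).
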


\begin{proof}

$(i)$ Let $A^c\in\tau$ and  $x\in A^{\diamond}.$ Suppose that  $x\notin A.$ Then, $A^c\in \tau(x).$ Since $x\in A^{\diamond},$ $A^c\cup U^c\in\mathcal{P}$ for all $U\in\tau(x).$ Therefore, $X=A\cup A^c=(A^c)^c\cup A^c\in \mathcal{P}.$ This contradicts with $X\notin \mathcal{P}.$ Hence,  $ A^{\diamond}\subseteq A.$\\

$(ii)$ It obvious from (i) since $\emptyset^c\in\tau.$ \\

$(iii)$ We have always $A^{\diamond}\subseteq cl(A^{\diamond}).$ Conversely, let $x\in cl(A^{\diamond})$ and $U\in\tau(x).$ 
Then, $U\cap A^{\diamond}\neq \emptyset.$ Therefore,  there exists $y\in X$ such that $y\in U$ and $y\in A^{\diamond}.$ Then, we have $V^c\cup A^c\in\mathcal{P}$ for all $V\in\tau(y).$ Thus, we get $U^c\cup A^c\in \mathcal{P}.$ This means that $x\in A^{\diamond}.$ Hence, $cl(A^{\diamond})\subseteq A^{\diamond}.$ Therefore, $A^{\diamond}$ is closed in $X.$
\\

$(iv)$ It is obvious from (i) and (iii).\\

$(v)$ Let $A\subseteq B$ and $x\in A^{\diamond}.$
Then, we have $A^c\cup U^c\in \mathcal{P}$ for all $U\in \tau(x).$ Thus, $B^c\cup U^c\in \mathcal{P}$ since $A\subseteq B.$ Hence, $x\in A^{\diamond}$. Hence, $A^{\diamond}\subseteq B^{\diamond}.$\\

$(vi)$ We can get from (v) that $A^{\diamond}\subseteq (A\cup B)^{\diamond}$ and $B^{\diamond}\subseteq (A\cup B)^{\diamond}$. Hence, $A^{\diamond}\cup B^{\diamond}\subseteq (A\cup B)^{\diamond}.$ Conversely, let $x\notin A^{\diamond}\cup B^{\diamond}.$ Then $x\notin A^{\diamond}$ and $x\notin B^{\diamond}.$ Then, there exist open sets $U$ and $V$ containing $x$ such that $A^c\cup U^c\notin \mathcal{P}$ and $B^c\cup V^c\notin \mathcal{P}.$ Put $W=U\cap V.$ Hence, $W$ is open containing $x$ such that $A^c\cup W^c\notin \mathcal{P}$ and $B^c\cup W^c\notin \mathcal{P}.$ Then, we have $(A\cup B)^c\cup W^c=(A^c\cap B^c)\cup W^c=(A^c\cup W^c)\cap (B^c\cup W^c)\notin \mathcal{P}$ since $\mathcal{P}$ is primal. This means that $x\notin (A\cup B)^{\diamond}.$ Thus, $(A\cup B)^{\diamond}\subseteq A^{\diamond}\cup B^{\diamond}.$
\\

$(vii)$ and $(viii)$ can be proven similarly.
\end{proof}

\begin{remark}
The inclusion given in (vii) of Theorem \ref{1} need not be reversible as shown by the following example.
\end{remark}


\begin{example}
Let $(X,\tau)$ be indiscrete topological space, where $X=\{a,b,c\}.$ We consider a primal  $\mathcal{P} = \{\emptyset, \{a\}, \{b\},\{c\},\{a,b\},\{a,c\}\}$ on $X$. Now, if $A=\{b\}$ and $B=\{c\},$ then we have $A^{\diamond}\cap B^{\diamond}=X\cap X=X \neq \emptyset =\emptyset^{\diamond}=(A\cap B)^{\diamond}.$
\end{example}

\begin{theorem} 
Let $(X,\tau,\mathcal{P})$ be a primal topological space and $A,B\subseteq X.$ If $A$ is open in $X,$ then $A\cap B^{\diamond}\subseteq (A\cap B)^{\diamond}.$
\end{theorem}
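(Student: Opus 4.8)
The plan is to work directly from the definition of the operator $(\cdot)^{\diamond}$, unpacking membership on both sides of the desired inclusion. Fix an arbitrary point $x \in A \cap B^{\diamond}$; the goal is to show $x \in (A \cap B)^{\diamond}$, that is, that $(A \cap B)^c \cup U^c \in \mathcal{P}$ for every $U \in \tau(x)$. Since $(A \cap B)^c = A^c \cup B^c$, this amounts to verifying $A^c \cup B^c \cup U^c \in \mathcal{P}$ for each open neighborhood $U$ of $x$.

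First I would use the hypothesis that $A$ is open. Because $x \in A$ and $A \in \tau$, we have $A \in \tau(x)$; consequently, for any $U \in \tau(x)$ the intersection $A \cap U$ is again an open set containing $x$, so $A \cap U \in \tau(x)$. This substitution, replacing $U$ by $A \cap U$, is the crux of the argument and the one step that genuinely uses the openness of $A$.

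Next I would feed the neighborhood $A \cap U$ into the assumption $x \in B^{\diamond}$. By the definition of $B^{\diamond}$, applied to $A \cap U \in \tau(x)$, we obtain $B^c \cup (A \cap U)^c \in \mathcal{P}$. Rewriting the complement via De Morgan gives $(A \cap U)^c = A^c \cup U^c$, and hence $B^c \cup A^c \cup U^c \in \mathcal{P}$, which is precisely $(A \cap B)^c \cup U^c \in \mathcal{P}$. Since $U \in \tau(x)$ was arbitrary, this shows $x \in (A \cap B)^{\diamond}$, completing the inclusion $A \cap B^{\diamond} \subseteq (A \cap B)^{\diamond}$.

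I would expect essentially no obstacle here beyond spotting the right test neighborhood: the whole argument is an inclusion-preserving manipulation inside $\mathcal{P}$, and notably it does not even require the downward-closure axiom (ii) of a primal, only the set-theoretic identity for the complement together with the closedness of $\tau(x)$ under finite intersection. The sole place where the hypothesis ``$A$ open'' enters is in guaranteeing $A \cap U \in \tau(x)$; without it one could not sharpen $U$ to $A \cap U$, and the conclusion may fail.
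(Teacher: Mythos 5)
Your proof is correct and follows essentially the same route as the paper: both arguments test the defining condition of $B^{\diamond}$ on the shrunken neighborhood $A\cap U\in\tau(x)$ and then use the identity $(A\cap B)^c\cup U^c=B^c\cup(A\cap U)^c$ to conclude. No gaps.
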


\begin{proof}
Let $A\in \tau$ and $x\in A\cap B^{\diamond}.$ Therefore, $x\in A$ and $x\in B^{\diamond}.$ Then, we have $B^c\cup U^c\in\mathcal{P}$ for all $U\in\tau(x).$ Since $A\in\tau,$ we get $(A\cap B)^c\cup U^c=B^c\cup (A\cap U)^c\in\mathcal{P}$ for all $U\in\tau(x).$ This means that $x\in (A\cap B)^{\diamond}.$ Thus, $A\cap B^{\diamond}\subseteq (A\cap B)^{\diamond}.$
\end{proof}

\begin{definition}
Let $(X,\tau,\mathcal{P})$ be a primal topological space. We consider a map $cl^{\diamond}: 2^X\rightarrow 2^X$ as $cl^{\diamond}(A)=A\cup A^{\diamond}$, where $A$ is any subset of $X$. 
\end{definition}

\begin{theorem} \label{2}
Let $(X,\tau,\mathcal{P})$ be a primal topological space and $A,B\subseteq X.$ Then, the following statements hold: \\

$(i)$ $cl^{\diamond}(\emptyset)= \emptyset$,\\

$(ii)$ $cl^{\diamond}(X)= X$,\\

$(iii)$ $A\subseteq cl^{\diamond}(A)$,\\

$(iv)$ If $A\subseteq B$, then $cl^{\diamond}(A)\subseteq cl^{\diamond}(B)$,\\

$(v)$ $cl^{\diamond}(A)\cup cl^{\diamond}(B)= cl^{\diamond}(A\cup B),$\\

$(vi)$ $cl^{\diamond}(cl^{\diamond}(A))=cl^{\diamond}(A).$
\\
\end{theorem}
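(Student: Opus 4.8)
The plan is to verify that $cl^{\diamond}$ satisfies the six Kuratowski-type closure properties by reducing each one to the already-established facts about the $(\cdot)^{\diamond}$ operator in Theorem~\ref{1}, since $cl^{\diamond}(A)=A\cup A^{\diamond}$ is built directly from $(\cdot)^{\diamond}$. For $(i)$, I would compute $cl^{\diamond}(\emptyset)=\emptyset\cup\emptyset^{\diamond}$ and apply part $(ii)$ of Theorem~\ref{1}, which gives $\emptyset^{\diamond}=\emptyset$, so the union is $\emptyset$. For $(ii)$, the key observation is that $X^{\diamond}\subseteq X$ trivially (it is a subset of $X$ by definition of the operator), hence $cl^{\diamond}(X)=X\cup X^{\diamond}=X$. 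Property $(iii)$ is immediate from the definition, since $A\subseteq A\cup A^{\diamond}=cl^{\diamond}(A)$.

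For monotonicity $(iv)$, I would assume $A\subseteq B$ and combine two inclusions: $A\subseteq B\subseteq cl^{\diamond}(B)$ and, using part $(v)$ of Theorem~\ref{1}, $A^{\diamond}\subseteq B^{\diamond}\subseteq cl^{\diamond}(B)$; taking the union of the left-hand sides yields $cl^{\diamond}(A)=A\cup A^{\diamond}\subseteq cl^{\diamond}(B)$. For the additivity property $(v)$, I would expand both sides: the left side is $(A\cup A^{\diamond})\cup(B\cup B^{\diamond})$, and the right side is $(A\cup B)\cup(A\cup B)^{\diamond}$. These coincide once I rearrange the union and invoke part $(vi)$ of Theorem~\ref{1}, namely $(A\cup B)^{\diamond}=A^{\diamond}\cup B^{\diamond}$, so both sides equal $A\cup B\cup A^{\diamond}\cup B^{\diamond}$.

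The main obstacle, and the only part requiring genuine work, is idempotence $(vi)$. Here I would first unwind the definition twice:
\[
cl^{\diamond}(cl^{\diamond}(A))=(A\cup A^{\diamond})\cup(A\cup A^{\diamond})^{\diamond}.
\]
Using part $(vi)$ of Theorem~\ref{1} on the trailing term gives $(A\cup A^{\diamond})^{\diamond}=A^{\diamond}\cup(A^{\diamond})^{\diamond}$, and then part $(iv)$ of Theorem~\ref{1}, which states $(A^{\diamond})^{\diamond}\subseteq A^{\diamond}$, lets me absorb the double-diamond term. Consequently the whole expression collapses to $A\cup A^{\diamond}\cup A^{\diamond}\cup(A^{\diamond})^{\diamond}=A\cup A^{\diamond}=cl^{\diamond}(A)$. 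The reverse inclusion $cl^{\diamond}(A)\subseteq cl^{\diamond}(cl^{\diamond}(A))$ is free from property $(iii)$ applied to the set $cl^{\diamond}(A)$. Thus the crux is simply assembling parts $(iv)$ and $(vi)$ of the earlier theorem correctly; no new estimate on the primal $\mathcal{P}$ is needed, since all the topological content has already been packaged into Theorem~\ref{1}.
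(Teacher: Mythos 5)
Your proposal is correct and follows essentially the same route as the paper: each part is reduced to the corresponding property of $(\cdot)^{\diamond}$ from Theorem~\ref{1}, with idempotence handled by expanding $(A\cup A^{\diamond})^{\diamond}=A^{\diamond}\cup(A^{\diamond})^{\diamond}$ and absorbing via $(A^{\diamond})^{\diamond}\subseteq A^{\diamond}$. If anything, your citations are slightly cleaner than the paper's (which cites part $(v)$ of Theorem~\ref{1} where the finite-additivity part $(vi)$ is the one actually needed).
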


\begin{proof}
Let $A,B\subseteq X.$\\

$(i)$ Since $cl^{\diamond}(\emptyset)=\emptyset,$ we have $cl^{\diamond}(\emptyset)=\emptyset\cup \emptyset^{\diamond}=\emptyset.$  
\\

$(ii)$ Since $X\cup X^{\diamond}=X,$ we have $cl^{\diamond}(X)=X.$\\

$(iii)$ Since $cl^{\diamond}(A)=A\cup A^{\diamond},$ we have $A\subseteq cl^{\diamond}(A).$\\

$(iv)$ Let $A\subseteq B.$ We get from (v) of Theorem \ref{1} that $A^{\diamond}\subseteq B^{\diamond}.$ Therefore, we have
$A\cup A^{\diamond}\subseteq B\cup B^{\diamond}$ which means that $cl^{\diamond}(A)\subseteq cl^{\diamond}(B).$
\\

$(v)$ It is obvious from the definition of the operator $cl^{\diamond}$ and (v) of  Theorem  \ref{1}.
\\

$(vi)$ It is obvious from (iii) that $cl^{\diamond}(A)\subseteq cl^{\diamond}(cl^{\diamond}(A)).$ On the other hand, since $A^{\diamond}$ is closed in $X,$ we have  $(A^{\diamond})^{\diamond}\subseteq A^{\diamond}.$ Therefore,

$$\begin{array}{rcl}
cl^{\diamond}(cl^{\diamond}(A)) & = & cl^{\diamond}(A)\cup \left(cl^{\diamond}(A)\right)^{\diamond} \\ & = & cl^{\diamond}(A)\cup (A\cup A^{\diamond})^{\diamond} \\ & = & cl^{\diamond}(A)\cup A^{\diamond}\cup (A^{\diamond})^{\diamond}
\\ & \subseteq  & cl^{\diamond}(A)\cup A^{\diamond}\cup A^{\diamond} \\ & = & cl^{\diamond}(A)
\end{array}$$
Thus, we have $cl^{\diamond}(cl^{\diamond}(A))=cl^{\diamond}(A).$
\end{proof}

\begin{corollary}
Let $(X,\tau,\mathcal{P})$ be a primal topological space. Then the function $cl^{\diamond}: 2^X\rightarrow 2^X$ defined by $cl^{\diamond}(A)=A\cup A^{\diamond}$, where $A$ is any subset of $X,$ is a Kuratowski's closure operator.
\end{corollary}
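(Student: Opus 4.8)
The plan is to recall the standard definition of a Kuratowski closure operator and then observe that each of its defining axioms has already been verified, piece by piece, in Theorem \ref{2}. Recall that a map $c:2^X\to 2^X$ is a Kuratowski closure operator precisely when it satisfies four conditions: (K1) $c(\emptyset)=\emptyset$; (K2) $A\subseteq c(A)$ for every $A\subseteq X$; (K3) $c(A\cup B)=c(A)\cup c(B)$ for all $A,B\subseteq X$; and (K4) $c(c(A))=c(A)$ for every $A\subseteq X$. The strategy is simply to pair each axiom with the corresponding item of Theorem \ref{2}.

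First I would match (K1) with part $(i)$ of Theorem \ref{2}, which states $cl^{\diamond}(\emptyset)=\emptyset$. Next, axiom (K2) is exactly part $(iii)$, namely $A\subseteq cl^{\diamond}(A)$. The additivity axiom (K3) is the content of part $(v)$, which asserts $cl^{\diamond}(A\cup B)=cl^{\diamond}(A)\cup cl^{\diamond}(B)$. Finally, idempotence (K4) is precisely part $(vi)$, $cl^{\diamond}(cl^{\diamond}(A))=cl^{\diamond}(A)$. Since all four defining properties hold, $cl^{\diamond}$ qualifies as a Kuratowski closure operator, and the corollary follows.

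I expect essentially no genuine obstacle here, since the substantive work was already carried out in Theorem \ref{2}; the corollary is a bookkeeping consequence. The only care needed is to confirm that the four-axiom formulation of Kuratowski closure lines up exactly with items $(i)$, $(iii)$, $(v)$, $(vi)$ of that theorem. The remaining items, $(ii)$ giving $cl^{\diamond}(X)=X$ and $(iv)$ giving monotonicity, are not among the defining axioms but are consistent with them; indeed monotonicity $(iv)$ is already subsumed by (K3). Thus the proof amounts to citing the appropriate parts of Theorem \ref{2} and concluding.
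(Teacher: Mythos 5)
Your proposal is correct and matches the paper's intent exactly: the corollary is stated without a separate proof precisely because it follows by pairing the four Kuratowski axioms with items $(i)$, $(iii)$, $(v)$, $(vi)$ of Theorem \ref{2}, just as you do. Your remark that monotonicity is subsumed by additivity is a harmless and accurate aside.
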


\begin{definition}
Let $(X,\tau,\mathcal{P})$ be a primal topological space. Then, the family $\tau^{\diamond}=\{A\subseteq X|cl^{\diamond}(A^c)=A^c\}$ is a topology on $X$ induced by topology $\tau$ and primal $\mathcal{P}.$  It is called primal topology on $X.$ We can also write $\tau_{\mathcal{P}}^{\diamond}$ instead of $\tau^{\diamond}$ to specify the primal as per our requirements.
\end{definition}

\begin{theorem} \label{tausubset}
Let $(X,\tau,\mathcal{P})$ be a primal topological space. Then the primal topology $\tau^{\diamond}$ is finer than $\tau.$
\end{theorem}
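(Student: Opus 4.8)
The plan is to show directly that $\tau \subseteq \tau^{\diamond}$, which is exactly the assertion that $\tau^{\diamond}$ refines $\tau$. The key is to unpack the definition of $\tau^{\diamond}$ in terms of the diamond operator. By definition, $A \in \tau^{\diamond}$ means $cl^{\diamond}(A^c) = A^c$, and since $cl^{\diamond}(A^c) = A^c \cup (A^c)^{\diamond}$, this condition is equivalent to the single inclusion $(A^c)^{\diamond} \subseteq A^c$. So the whole problem reduces to verifying, for each open set $A$, that the diamond of its complement sits inside the complement.

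With that reformulation in hand, I would take an arbitrary $A \in \tau$ and consider its complement $A^c$. Since $A$ is open, the set $A^c$ is closed, and in particular its own complement $(A^c)^c = A$ belongs to $\tau$. This is precisely the hypothesis needed to invoke Theorem \ref{1}$(i)$: applying that statement to the set $A^c$ (whose complement is the open set $A$) yields $(A^c)^{\diamond} \subseteq A^c$. Hence $cl^{\diamond}(A^c) = A^c \cup (A^c)^{\diamond} = A^c$, so $A \in \tau^{\diamond}$. Since $A$ was an arbitrary member of $\tau$, this gives $\tau \subseteq \tau^{\diamond}$, which is the claim.

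There is no serious obstacle here; the argument is essentially a one-line application of the closedness property from Theorem \ref{1}$(i)$ once the membership condition for $\tau^{\diamond}$ is correctly rewritten. The only point demanding care is bookkeeping with complements: Theorem \ref{1}$(i)$ is phrased as ``if $A^c \in \tau$ then $A^{\diamond} \subseteq A$,'' so one must apply it to the set $A^c$ rather than to $A$ itself, matching its complement $A$ with the open set we started from. I would also note in passing that $\tau^{\diamond}$ is genuinely a topology (its closure operator $cl^{\diamond}$ satisfies Kuratowski's axioms by the preceding corollary), so ``finer'' is meaningful, and that the inclusion need not be strict in general.
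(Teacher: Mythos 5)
Your argument is correct and follows exactly the paper's own route: take $A\in\tau$, apply Theorem \ref{1}$(i)$ to $A^c$ (whose complement $A$ is open) to get $(A^c)^{\diamond}\subseteq A^c$, and conclude $cl^{\diamond}(A^c)=A^c$, i.e.\ $A\in\tau^{\diamond}$. No gaps; the careful bookkeeping with complements is the only subtlety and you handled it as the paper does.
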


\begin{proof}
Let $A\in\tau.$ Then, $A^c$ is $\tau$-closed in $X.$ From (i) of Theorem \ref{1}, we get  $(A^c)^{\diamond}\subseteq A^c.$ Thus, $cl^{\diamond}(A^c)=A^c\cup (A^c)^{\diamond}\subseteq A^c.$ Since $A^c\subseteq cl^{\diamond}(A^c)$ is always true for any subset $A$ of $X,$ we obtain $cl^{\diamond}(A^c)=A^c.$ This means that $A\in\tau^{\diamond}.$  Thus, we have $\tau \subseteq \tau^{\diamond}.$
\end{proof}

\begin{theorem}\label{karsit}
Let $(X,\tau,\mathcal{P})$ be a primal topological space. Then, the following statements hold:\\

$(i)$ if $\mathcal{P}=\emptyset,$ then $\tau^{\diamond}=2^X,$\\

$(ii)$ if $\mathcal{P}=2^X\setminus\{X\},$ then $\tau=\tau^{\diamond}.$\\
\end{theorem}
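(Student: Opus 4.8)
The plan is to unwind the definitions of $(\cdot)^{\diamond}$ and $cl^{\diamond}$ in the two extreme cases and show directly that every subset is open (part (i)) or that the operator collapses to the ordinary closure (part (ii)). For part (i), I would fix an arbitrary subset $A$ of $X$ and examine $A^{\diamond}$ when $\mathcal{P}=\emptyset$. By Definition 3.3, $x\in A^{\diamond}$ requires $A^c\cup U^c\in\mathcal{P}$ for every $U\in\tau(x)$; but $\mathcal{P}=\emptyset$ contains no set at all, so this membership can never hold, forcing $A^{\diamond}=\emptyset$ for every $A$. Consequently $cl^{\diamond}(A)=A\cup A^{\diamond}=A$ for all $A$, and in particular $cl^{\diamond}(A^c)=A^c$ for every $A$, so every subset lies in $\tau^{\diamond}$; hence $\tau^{\diamond}=2^X$.

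For part (ii), with $\mathcal{P}=2^X\setminus\{X\}$, the task is to show $\tau^{\diamond}=\tau$. Since Theorem \ref{tausubset} already gives $\tau\subseteq\tau^{\diamond}$, it suffices to prove the reverse inclusion $\tau^{\diamond}\subseteq\tau$, and I expect this to be the crux of the argument. The key reduction is to show that for this maximal primal the diamond operator coincides with ordinary closure, namely $A^{\diamond}=cl(A)$ for every $A\subseteq X$; granting that, $cl^{\diamond}(A)=A\cup cl(A)=cl(A)$, so the $\tau^{\diamond}$-closed sets are exactly the $\tau$-closed sets and the two topologies agree. To establish $A^{\diamond}=cl(A)$, I would argue pointwise: for any $x$ and any $U\in\tau(x)$, the set $A^c\cup U^c$ fails to be in $\mathcal{P}$ exactly when $A^c\cup U^c=X$, i.e. when $U\cap A=\emptyset$. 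Thus $x\notin A^{\diamond}$ iff some open neighborhood $U$ of $x$ misses $A$, which is precisely the condition $x\notin cl(A)$; taking complements yields $A^{\diamond}=cl(A)$.

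The main obstacle is the careful bookkeeping with complements and De Morgan's laws in the neighborhood condition of part (ii) — in particular verifying that $A^c\cup U^c$ equals $X$ if and only if $U\subseteq A^c$ (equivalently $U\cap A=\emptyset$), which is what connects membership in $\mathcal{P}=2^X\setminus\{X\}$ to the closure condition. Once that equivalence is pinned down, both parts follow mechanically, and part (i) is essentially immediate since the empty primal vacuously defeats the defining condition of $A^{\diamond}$.
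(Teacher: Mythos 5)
Your proposal is correct and follows essentially the same route as the paper: part (i) via $A^{\diamond}=\emptyset$ when $\mathcal{P}=\emptyset$, and part (ii) via the observation that $A^c\cup U^c\notin 2^X\setminus\{X\}$ forces $A^c\cup U^c=X$, i.e. $U\cap A=\emptyset$. The only cosmetic difference is that you package part (ii) as the clean identity $A^{\diamond}=cl(A)$ for all $A$, whereas the paper establishes just the one inclusion $cl(A^c)\subseteq (A^c)^{\diamond}\subseteq A^c$ for a given $A\in\tau^{\diamond}$; the underlying computation is identical.
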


\begin{proof}
$(i)$ We have always $\tau^{\diamond}\subseteq 2^X.$ Now, let $A\in 2^X.$ Since $\mathcal{P}=\emptyset,$ we have $A^{\diamond}=\emptyset$ for any subset $A$ of $X.$ Therefore, $cl^{\diamond}(A^c)=A^c.$ This means that $A\in\tau^{\diamond}.$ Hence, $2^X\subseteq\tau^{\diamond}.$ Thus, we have $\tau^{\diamond}=2^X.$ \\
$(ii)$ We have always $\tau\subseteq\tau^{\diamond}$ from Theorem \ref{tausubset}.  Now, we will prove that $\tau^{\diamond}\subseteq\tau.$ Let $A\in\tau^{\diamond}.$ Then  $A^c\cup (A^c)^{\diamond}= A^c$ which means that $(A^c)^{\diamond}\subseteq  A^c.$ Now, let $x\notin (A^c)^{\diamond}.$ Then, there exists $U\in\tau(x)$ such that $U^c\cup (A^c)^c=U^c\cup A\notin \mathcal{P}.$ Since $\mathcal{P}=2^X\setminus\{X\},$ we obtain $U^c\cup A=X$ and so $U\cap A^c=\emptyset.$ Therefore, $x\notin cl(A^c).$ Thus, we have $cl(A^c)\subseteq (A^c)^{\diamond}\subseteq  A^c.$ Hence, $cl(A^c)=A^c$ which means that $A^c$ is $\tau$-closed and so, $A\in\tau.$ Thus, $\tau^{\diamond}\subseteq \tau.$ Consequently, we have $\tau=\tau^{\diamond}.$
\end{proof}

\begin{remark}
The converse of Theorem \ref{karsit} need not be true as shown by the following example. 
\end{remark}

\begin{example}
Let $X=\{a,b,c\}$ with topology $\tau=\{\emptyset,X,\{a,b\}\}$ and $\mathcal{P}=2^X\setminus \{X,\{a,b\}\}.$ Simple calculations show that $\tau=\tau^{\diamond}$, but $\mathcal{P}$ is not equal to $2^X\setminus\{X\}.$
\end{example}

\begin{theorem}\label{3}
Let $(X,\tau,\mathcal{P})$ be a primal topological space and $A\subseteq X.$ Then the followings hold:\\

$(i)$ $A\in\tau^{\diamond}$ if and only if for all $x$ in $A,$ there exists an open set $U$ containing $x$ such that $U^c\cup A\notin\mathcal{P},$\\

$(ii)$ if $A\notin \mathcal{P},$ then $A\in\tau^{\diamond}.$
\end{theorem}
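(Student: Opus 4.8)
The plan is to treat both parts as essentially definition-chasing, with part (ii) falling out of part (i) together with the downward-closure property of a primal. Neither part requires genuinely new constructions; the only care needed is in bookkeeping the quantifiers when negating the defining condition of $(\cdot)^{\diamond}$.

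For part (i), I would start from the definition $A\in\tau^{\diamond}\iff cl^{\diamond}(A^c)=A^c$. Since $cl^{\diamond}(A^c)=A^c\cup(A^c)^{\diamond}$ and the inclusion $A^c\subseteq cl^{\diamond}(A^c)$ always holds (this is statement (iii) of Theorem \ref{2}), the equality $cl^{\diamond}(A^c)=A^c$ is equivalent to the single inclusion $(A^c)^{\diamond}\subseteq A^c$. Next I would compute $(A^c)^{\diamond}$ directly from Definition of the diamond operator: substituting $A^c$ for the argument and using $(A^c)^c=A$, one gets $(A^c)^{\diamond}=\{x\in X:(\forall U\in\tau(x))(A\cup U^c\in\mathcal{P})\}$. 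The inclusion $(A^c)^{\diamond}\subseteq A^c$ then reads, contrapositively on points, as ``$x\in A\Rightarrow x\notin(A^c)^{\diamond}$''. Unwinding $x\notin(A^c)^{\diamond}$ by De Morgan turns the universally quantified membership condition into an existentially quantified non-membership: there exists $U\in\tau(x)$ with $U^c\cup A\notin\mathcal{P}$. Chaining these equivalences gives exactly the stated biconditional, so the whole of (i) is a sequence of iff-steps with the only nontrivial input being Theorem \ref{2}(iii).

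For part (ii), I would invoke part (i) and verify its right-hand condition under the hypothesis $A\notin\mathcal{P}$. Fix an arbitrary $x\in A$; I need a single open neighborhood $U$ of $x$ with $U^c\cup A\notin\mathcal{P}$. The cleanest witness is $U=X$, which is open and contains $x$, since then $U^c\cup A=\emptyset\cup A=A\notin\mathcal{P}$ by assumption. (Alternatively, one observes $A\subseteq U^c\cup A$ for every $U$, so the downward-closure property of a primal—condition (ii) of the Corollary characterizing primals—forces $U^c\cup A\notin\mathcal{P}$ for \emph{every} open $U$; either way the required $U$ exists.) Applying part (i) then yields $A\in\tau^{\diamond}$.

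I expect no real obstacle here: the argument is purely formal. The one place to be vigilant is the logical step in part (i) where the negation of a universal statement over $\tau(x)$ is converted into an existential one, and keeping straight that $\tau^{\diamond}$ is defined through the closure of the complement $A^c$ rather than $A$ itself, so the diamond operator must be evaluated at $A^c$ (hence the appearance of $A=(A^c)^c$ inside $\mathcal{P}$). Getting that complement bookkeeping right is what makes the final condition come out as $U^c\cup A\notin\mathcal{P}$ rather than with $A^c$.
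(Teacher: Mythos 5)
Your proposal is correct and follows essentially the same route as the paper: part (i) is the identical chain of equivalences (definition of $\tau^{\diamond}$, reduction to $(A^c)^{\diamond}\subseteq A^c$, then negating the universally quantified condition pointwise), and part (ii) uses the same witness $U=X$. The only addition is your parenthetical remark that every open $U$ works via the superset property of Corollary 3.1(ii), which is a valid but inessential strengthening.
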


\begin{proof} $(i)$ Let $A\in\tau^{\diamond}.$
$$\begin{array}{rcl}A\in\tau^{\diamond}& \Leftrightarrow & cl^{\diamond}(A^c)=A^c \\
& \Leftrightarrow & A^c\cup (A^c)^{\diamond}=A^c
\\
& \Leftrightarrow & (A^c)^{\diamond}\subseteq A^c
\\
& \Leftrightarrow & A\subseteq ((A^c)^{\diamond})^c
\\
& \Leftrightarrow & (\forall x\in A)(x\notin (A^c)^{\diamond})
\\
& \Leftrightarrow & (\forall x\in  A)(\exists U\in\tau (x))(U^c\cup (A^c)^c=U^c\cup A\notin\mathcal{P}).
\end{array}$$

$(ii)$ Let $A\notin\mathcal{P}$ and $x\in A.$ Put $U=X.$ Then, $U$ is a $\tau$-open set containing $x.$ Since $A\notin \mathcal{P}$ and $U^c\cup A=A,$ we have $U^c\cup A\notin\mathcal{P}.$ From (i), we get $A\in\tau^{\diamond}.$
\end{proof}

\begin{theorem} 
Let $(X,\tau,\mathcal{P})$ be a primal topological space. Then, the family $\mathcal{B}_{\mathcal{P}}=\{T\cap P|T\in\tau$ and $ P\notin \mathcal{P}\}$ is a base for the primal topology $\tau^{\diamond}$ on $X.$
\end{theorem}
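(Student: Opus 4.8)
The plan is to show that $\mathcal{B}_{\mathcal{P}}$ is a base for $\tau^{\diamond}$ by verifying two things: first, that every element of $\mathcal{B}_{\mathcal{P}}$ actually belongs to $\tau^{\diamond}$, and second, that every member of $\tau^{\diamond}$ can be written as a union of members of $\mathcal{B}_{\mathcal{P}}$. For the first part, I would take an arbitrary element $T \cap P$ with $T \in \tau$ and $P \notin \mathcal{P}$, and use the characterization in Theorem \ref{3}$(i)$ to certify membership in $\tau^{\diamond}$: for each $x \in T \cap P$, I must produce an open set $U$ containing $x$ with $U^c \cup (T \cap P) \notin \mathcal{P}$. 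The natural candidate is $U = T$, since $T$ is open and contains $x$. Then I would compute $T^c \cup (T \cap P) = T^c \cup P$ (using that $T^c \cup (T \cap P) = (T^c \cup T) \cap (T^c \cup P) = T^c \cup P$), and observe that $T^c \cup P \supseteq P$; since $P \notin \mathcal{P}$ and $\mathcal{P}$ is downward closed (property (ii) in Corollary, the contrapositive form), any superset of a non-member is a non-member, so $T^c \cup P \notin \mathcal{P}$. This gives $T \cap P \in \tau^{\diamond}$.

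For the second part, I would take an arbitrary $A \in \tau^{\diamond}$ and show $A = \bigcup \{ B \in \mathcal{B}_{\mathcal{P}} : B \subseteq A \}$, which reduces to showing that each $x \in A$ lies in some basic set $B = T \cap P \subseteq A$. By Theorem \ref{3}$(i)$, for each $x \in A$ there is an open set $U_x$ containing $x$ with $U_x^c \cup A \notin \mathcal{P}$. The idea is to set $P_x := U_x^c \cup A$, which satisfies $P_x \notin \mathcal{P}$, and then take the basic set $U_x \cap P_x$. I would need to check that $x \in U_x \cap P_x$ (clear, since $x \in U_x \subseteq P_x$ as $U_x \subseteq U_x^c \cup A$ is false in general — here I must be careful) and that $U_x \cap P_x \subseteq A$. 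For the containment, $U_x \cap (U_x^c \cup A) = (U_x \cap U_x^c) \cup (U_x \cap A) = U_x \cap A \subseteq A$, so $U_x \cap P_x = U_x \cap A \subseteq A$, as desired. For membership, since $x \in A$ we have $x \in U_x \cap A = U_x \cap P_x$. Thus each point of $A$ sits inside a basic set contained in $A$, so $A$ is the union of the basic sets it contains.

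The main obstacle, and the step I would examine most carefully, is the containment bookkeeping in the second part, specifically making sure the chosen basic set $U_x \cap P_x$ is genuinely a member of $\mathcal{B}_{\mathcal{P}}$ (i.e. that $U_x \in \tau$ and $P_x \notin \mathcal{P}$) and simultaneously lies inside $A$ and contains $x$. The computation $U_x \cap (U_x^c \cup A) = U_x \cap A$ is what makes everything click, so I would foreground that identity. I would also remark that once both inclusions are established, the standard criterion for a base — that a family $\mathcal{B} \subseteq \tau^{\diamond}$ is a base iff every open set is a union of members of $\mathcal{B}$ — applies directly, so no separate verification of the axiomatic base conditions is required.
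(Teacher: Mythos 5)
Your proposal is correct, and its core construction coincides with the paper's: given $A\in\tau^{\diamond}$ and $x\in A$, both arguments take the neighbourhood $U$ supplied by Theorem \ref{3}(i), set $P=U^{c}\cup A\notin\mathcal{P}$, and use the identity $U\cap(U^{c}\cup A)=U\cap A$ to produce a basic set containing $x$ and contained in $A$. The only (harmless) divergence is in showing $\mathcal{B}_{\mathcal{P}}\subseteq\tau^{\diamond}$: the paper cites $\tau\subseteq\tau^{\diamond}$, the implication $P\notin\mathcal{P}\Rightarrow P\in\tau^{\diamond}$ from Theorem \ref{3}(ii), and closure of $\tau^{\diamond}$ under finite intersections, whereas you verify the characterization of Theorem \ref{3}(i) directly with $U=T$ together with the fact that a superset of a non-member of $\mathcal{P}$ is a non-member; both routes are valid.
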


\begin{proof}
Let $B\in\mathcal{B}_{\mathcal{P}}.$ Then, there exist $T\in\tau$ and $P\notin\mathcal{P}$ such that $B=T\cap P.$ Since $\tau\subseteq\tau^{\diamond},$ we get $T\in\tau^{\diamond}.$ On the other hand, from Theorem \ref{1} (ii), we have $P\in\tau^{\diamond}.$ Therefore, $B\in\tau^{\diamond}.$ Consequently, $\mathcal{B}_{\mathcal{P}}\subseteq \tau^{\diamond}.$ Now, let $A\in\tau^{\diamond}$ and $x\in A.$ Then, from Theorem \ref{3} (i), there exists $U\in\tau(x)$ such that $U^c\cup A\notin\mathcal{P}.$ Now, let $B=U\cap (U^c\cup A).$ Hence, we have $B\in\mathcal{B}_{\mathcal{P}}$ such that $x\in B\subseteq A.$
\end{proof}


\begin{theorem}
Let $(X,\tau,\mathcal{P})$ and $(X,\tau,\mathcal{Q})$ be two primal topological spaces. If $\mathcal{P}\subseteq \mathcal{Q},$ then $\tau_{\mathcal{Q}}^{\diamond}\subseteq \tau_\mathcal{P}^{\diamond}.$
\end{theorem}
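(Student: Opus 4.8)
The plan is to reduce everything to the pointwise membership criterion for the primal topology established in Theorem \ref{3}(i), which characterizes $A\in\tau^{\diamond}$ purely in terms of the condition $U^{c}\cup A\notin\mathcal{P}$. Since the only place the primal enters this criterion is through a \emph{non}-membership statement, the hypothesis $\mathcal{P}\subseteq\mathcal{Q}$ feeds in through its contrapositive, and the resulting inclusion of topologies comes out reversed, exactly as claimed.

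Concretely, first I would fix $A\in\tau_{\mathcal{Q}}^{\diamond}$ and an arbitrary point $x\in A$. Applying Theorem \ref{3}(i) to the primal $\mathcal{Q}$, I obtain an open set $U\in\tau(x)$ with $U^{c}\cup A\notin\mathcal{Q}$. The key step is the observation that $\mathcal{P}\subseteq\mathcal{Q}$ is logically equivalent to its contrapositive: whenever a set fails to belong to $\mathcal{Q}$, it also fails to belong to $\mathcal{P}$. Hence the very same $U$ satisfies $U^{c}\cup A\notin\mathcal{P}$. Since $x\in A$ was arbitrary, the pointwise criterion of Theorem \ref{3}(i) is now met for the primal $\mathcal{P}$, so $A\in\tau_{\mathcal{P}}^{\diamond}$. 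This yields $\tau_{\mathcal{Q}}^{\diamond}\subseteq\tau_{\mathcal{P}}^{\diamond}$.

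An equivalent route, if one prefers to argue at the level of operators, is to first prove the monotonicity fact $A_{\mathcal{P}}^{\diamond}\subseteq A_{\mathcal{Q}}^{\diamond}$ directly from the definition of $(\cdot)^{\diamond}$: if $x\in A_{\mathcal{P}}^{\diamond}$, then $A^{c}\cup U^{c}\in\mathcal{P}\subseteq\mathcal{Q}$ for every $U\in\tau(x)$, so $x\in A_{\mathcal{Q}}^{\diamond}$. Consequently $cl^{\diamond}_{\mathcal{P}}(A^{c})=A^{c}\cup (A^{c})_{\mathcal{P}}^{\diamond}\subseteq A^{c}\cup (A^{c})_{\mathcal{Q}}^{\diamond}=cl^{\diamond}_{\mathcal{Q}}(A^{c})$, and then for $A\in\tau_{\mathcal{Q}}^{\diamond}$ one combines $cl^{\diamond}_{\mathcal{P}}(A^{c})\subseteq cl^{\diamond}_{\mathcal{Q}}(A^{c})=A^{c}$ with the always-true inclusion $A^{c}\subseteq cl^{\diamond}_{\mathcal{P}}(A^{c})$ to conclude $cl^{\diamond}_{\mathcal{P}}(A^{c})=A^{c}$, i.e. $A\in\tau_{\mathcal{P}}^{\diamond}$.

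There is no genuine obstacle here; the proof is a one-step monotonicity argument. The only point demanding care is bookkeeping the reversal of the inclusion — the assignment $\mathcal{P}\mapsto\tau_{\mathcal{P}}^{\diamond}$ is order-reversing — which is forced by the fact that the primal enters the membership criterion negatively. As a sanity check I would compare the direction of the inequality against the degenerate extremes recorded in Theorem \ref{karsit}: the smallest primal $\mathcal{P}=\emptyset$ produces the largest topology $\tau^{\diamond}=2^{X}$, while the largest primal $2^{X}\setminus\{X\}$ produces the smallest admissible $\tau^{\diamond}=\tau$, confirming that a larger primal corresponds to a coarser primal topology.
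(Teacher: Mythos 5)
Your proposal is correct, and your first route is essentially the paper's own argument: the paper likewise fixes $A\in\tau_{\mathcal{Q}}^{\diamond}$, takes $x\notin A^{c}$, extracts $U\in\tau(x)$ with $U^{c}\cup A\notin\mathcal{Q}$, and passes to $U^{c}\cup A\notin\mathcal{P}$ via the contrapositive of $\mathcal{P}\subseteq\mathcal{Q}$ (you merely package this through Theorem \ref{3}(i) rather than unfolding $cl^{\diamond}$ by hand). Your alternative operator-level route and the sanity check against Theorem \ref{karsit} are sound but add nothing beyond the same one-step monotonicity observation.
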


\begin{proof}
Let $A\in \tau_{\mathcal{Q}}^{\diamond}.$ Then  $A^c\cup(A^c)_{\mathcal{Q}}^{\diamond}= A^c$ which means that $(A^c)_{\mathcal{Q}}^{\diamond}\subseteq  A^c.$ Now, let $x\notin A^c.$ Then, we get $x\notin (A^c)_{\mathcal{Q}}^{\diamond}$ and so there exists $U\in\tau(x)$ such that $U^c\cup (A^c)^c=U^c\cup A\notin \mathcal{Q}.$ Since $\mathcal{P}\subseteq \mathcal{Q},$ we have $U^c\cup A\notin \mathcal{P}.$ Therefore, $x\notin (A^c)_{\mathcal{P}}^{\diamond}.$ Thus, $(A^c)_{\mathcal{P}}^{\diamond}\subseteq A^c$ and so  $cl^{\diamond}(A^c)=A^c\cup (A^c)_{\mathcal{P}}^{\diamond}=A^c.$ Hence, $A\in \tau_{\mathcal{P}}^{\diamond}.$  Consequently, we have $\tau_{\mathcal{Q}}^{\diamond}\subseteq\tau_{\mathcal{P}}^{\diamond}.$
\end{proof}

\begin{theorem} \label{teo} Let $f:X\to Y$ be a function and $\mathcal{P}\subseteq 2^X.$ If $\mathcal{P}$ is a primal on $X$ and $f$ is not surjective, then $\mathcal{Q} =\{f(P) | P\in \mathcal{P}\}$ is a primal on $Y$.
\end{theorem}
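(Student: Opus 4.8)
The plan is to verify the three defining conditions of a primal (Definition 3.1) for the family $\mathcal{Q}=\{f(P)\mid P\in\mathcal{P}\}$, relying throughout on the elementary image--preimage identity $f(P\cap f^{-1}(S))=f(P)\cap S$, which holds for every $P\subseteq X$ and $S\subseteq Y$. I would record at the outset the basic structural fact that every member of $\mathcal{Q}$ is contained in the image $f(X)$; this observation is what makes non-surjectivity relevant and, as I explain below, is also where the argument becomes delicate.

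First I would dispose of conditions (i) and (ii), which I expect to be routine. For (i), if $Y\in\mathcal{Q}$ then $Y=f(P)\subseteq f(X)$ for some $P\in\mathcal{P}$, forcing $f$ to be surjective and contradicting the hypothesis; thus non-surjectivity is exactly what guarantees $Y\notin\mathcal{Q}$, which is condition (i). For (ii), given $A=f(P)\in\mathcal{Q}$ with $P\in\mathcal{P}$ and $B\subseteq A$, I would set $P'=P\cap f^{-1}(B)$; the identity gives $f(P')=f(P)\cap B=B$, and since $P'\subseteq P\in\mathcal{P}$ the downward closure of $\mathcal{P}$ yields $P'\in\mathcal{P}$, whence $B=f(P')\in\mathcal{Q}$.

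The hard part will be condition (iii): from $A\cap B\in\mathcal{Q}$ one must deduce $A\in\mathcal{Q}$ or $B\in\mathcal{Q}$. The natural attempt is to write $A\cap B=f(P)$ with $P\in\mathcal{P}$, observe that $P=(P\cap f^{-1}(A))\cap(P\cap f^{-1}(B))$ (using $P\subseteq f^{-1}(f(P))=f^{-1}(A\cap B)$), and apply condition (iii) of $\mathcal{P}$ to conclude that $P\cap f^{-1}(A)\in\mathcal{P}$ or $P\cap f^{-1}(B)\in\mathcal{P}$. I would scrutinise this step very carefully, because the image of $P\cap f^{-1}(A)$ is $f(P)\cap A=A\cap B$, not $A$, so the pullback recovers only the intersection and never certifies $A$ (or $B$) as a member of $\mathcal{Q}$. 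The structural reason is precisely the one flagged above: any point of $Y\setminus f(X)$ — which exists because $f$ is not surjective — may be adjoined to both $A$ and $B$ without changing $A\cap B$, keeping $A$ and $B$ outside $\mathcal{Q}$ while $A\cap B$ stays inside it. I therefore expect (iii) to be the real obstruction, and before committing to a proof I would test it on a minimal instance (such as $|X|=2$, $|Y|=3$ with a non-surjective $f$); if it fails there, the statement would need either an additional hypothesis or a revised definition of $\mathcal{Q}$ (for instance, closing it downward inside $2^Y$) to be salvageable.
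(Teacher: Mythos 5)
Your handling of conditions (i) and (ii) coincides with the paper's: non-surjectivity rules out $Y\in\mathcal{Q}$, and for heredity the paper likewise sets $P_2=f^{-1}(B)\cap P_1$ and uses $f(P_1\cap f^{-1}(B))=f(P_1)\cap B=B$. The substantive issue is condition (iii), and here your refusal to push the argument through is the right call: the theorem as stated is false, and the paper's own proof of (iii) commits exactly the error you anticipated. From the hypothesis $A\cap B\in\mathcal{Q}$ the paper writes ``there exist $P_1,P_2\in\mathcal{P}$ such that $A=f(P_1)$ and $B=f(P_2)$.'' That is not a consequence of $A\cap B\in\mathcal{Q}$; it is the assertion that $A\in\mathcal{Q}$ and $B\in\mathcal{Q}$, i.e.\ (a strengthening of) the conclusion to be proved, so the remainder of that paragraph is vacuous.

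A concrete counterexample of the minimal kind you proposed to test: let $X=\{a,b\}$, $Y=\{1,2,3\}$, $f(a)=1$, $f(b)=2$, and $\mathcal{P}=2^X\setminus\{X\}=\{\emptyset,\{a\},\{b\}\}$, which is a primal by the paper's Example 3.2. Then $\mathcal{Q}=\{\emptyset,\{1\},\{2\}\}$, and taking $A=\{3\}$, $B=\{1,2\}$ gives $A\cap B=\emptyset\in\mathcal{Q}$ while $A\notin\mathcal{Q}$ (every member of $\mathcal{Q}$ lies inside $f(X)=\{1,2\}$) and $B\notin\mathcal{Q}$ (no $P\in\mathcal{P}$ has $f(P)=\{1,2\}$). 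This is precisely your structural diagnosis: sets meeting $Y\setminus f(X)$ can never be certified by $\mathcal{Q}$, yet they can intersect down to a member of $\mathcal{Q}$. One caution about your proposed repair: merely closing $\mathcal{Q}$ downward in $2^Y$ does not fix this, since $\{3\}$ and $\{1,2\}$ still escape the downward closure in the example above. The construction that does work (and needs no surjectivity hypothesis) is $\mathcal{Q}=\{B\subseteq Y: f^{-1}(B)\in\mathcal{P}\}$, for which all three primal axioms follow immediately from $f^{-1}(A\cap B)=f^{-1}(A)\cap f^{-1}(B)$ and $f^{-1}(Y)=X$.
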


\begin{proof}
$(i)$ Suppose that $Y\in\mathcal{Q}.$ Then there exists $P\in\mathcal{P}$ such that $f(P)=Y.$ However, this contradicts the fact that $f$ is not surjective.
\\
\\

\color{black} $(ii)$ Let $A\in\mathcal{Q}$ and $B\subseteq A.$ Then, there exists $P_1\in\mathcal{P}$ such that $A=f(P_1).$ Now, set $P_2=f^{-1} (B)\cap P_1.$ It is obvious that $P_2\subseteq P_1.$ Since $\mathcal{P}$ is a primal on $X,$ $\mathcal{P}$ is downward closed and so we have $P_2\in\mathcal{P}.$ Also, $B=f(P_2).$ This means that $B\in\mathcal{Q}.$ 
\\

$(iii)$ Let $A\cap B\in\mathcal{Q}.$ Then, there exists $P_1,P_2\in\mathcal{P}$ such that $A=f(P_1)$ and $B=f(P_2).$ Since $\mathcal{P}$ is a primal on $X$ and $P_1\cap P_2\subseteq P_1,$ we have $P_1\cap P_2\in \mathcal{P}$ and so $P_1\in \mathcal{P}$ or $P_2\in \mathcal{P}.$ Therefore, $A=f(P_1)\in \mathcal{Q}$ or $B=f(P_2)\in \mathcal{Q}.$ 
\end{proof}


\begin{corollary} \label{C}
It is obvious from Theorem \ref{teo} that the property of being primal is not a topological property.
\end{corollary}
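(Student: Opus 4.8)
The plan is to argue from the standard meaning of a topological property, namely a property of a space (or of a structure attached to a space) that is preserved under homeomorphisms, and to show that Theorem \ref{teo} exhibits transfer behavior incompatible with primality being governed by that notion of invariance. Since the corollary asserts the conclusion is ``obvious from Theorem \ref{teo},'' the work is to extract from that theorem the precise feature that disqualifies primality as a topological invariant.

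First I would recall that a homeomorphism is by definition bijective, hence surjective, whereas Theorem \ref{teo} applies \emph{precisely} when $f$ is not surjective. Thus the maps witnessing the transfer $\mathcal{P}\mapsto\mathcal{Q}=\{f(P)\mid P\in\mathcal{P}\}$ are never homeomorphisms; they need not be continuous with respect to the topologies $\tau$ and $\sigma$, and a non-surjective $f$ forces $f(X)\subsetneq Y$, so $X$ and $Y$ are generally not even homeomorphic. The second step is to observe that neither Definition 3.1 nor the verification in the proof of Theorem \ref{teo} uses any topology: the three primal axioms are phrased entirely in terms of $X$, the subset relation, and finite intersection, and the transfer argument invokes only images, preimages, and these set operations. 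Hence being a primal is determined by the set-theoretic data alone and survives transport along arbitrary non-surjective maps, irrespective of any topology placed on either set.

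Putting these together, the conclusion I would draw is that the formation and preservation of primals is governed by set-theoretic rather than topological morphisms: a primal on $X$ induces a primal on a generally non-homeomorphic space $Y$ through a map that ignores $\tau$ and $\sigma$ entirely. A property inherited across non-homeomorphic spaces via non-continuous maps cannot be an invariant of the topological (homeomorphism) structure, so being a primal is not a topological property.

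The main obstacle I anticipate is precision of the claim, not computation. Because a homeomorphism is a bijection, one checks that the image of a primal under a homeomorphism is again a primal (any $f(P)=Y$ would force $P=X\notin\mathcal{P}$, and bijectivity repairs the downward-closure and intersection axioms), so primality \emph{is} preserved by homeomorphisms; consequently I cannot refute topologicality by the usual device of two homeomorphic objects that disagree on the property. The argument must therefore be framed as one of topology-\emph{independence}: Theorem \ref{teo} shows the structure is carried by a strictly larger, topology-agnostic class of maps, which is exactly the evidence that no topological data is being used. The care needed is to read ``not a topological property'' as ``not an invariant tied to the homeomorphism structure,'' and to note that the non-surjectivity hypothesis is what guarantees the target collection omits $Y$, so that the transfer genuinely lands in a different, non-homeomorphic space.
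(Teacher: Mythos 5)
Your proposal is correct and is, in substance, the argument the paper intends; the paper itself offers nothing beyond the word ``obvious,'' so the comparison is necessarily against its implicit reasoning. Both you and the paper read Theorem \ref{teo} as evidence of topology-independence: the transfer $\mathcal{P}\mapsto\mathcal{Q}=\{f(P)\mid P\in\mathcal{P}\}$ is available for any non-surjective set map, with no continuity or homeomorphism hypothesis, and indeed neither the definition of a primal nor the proof of Theorem \ref{teo} ever mentions the topologies $\tau$ or $\sigma$. What you add, and what the paper glosses over, is that this inference is delicate: under the textbook definition of a topological property (invariance under homeomorphisms), being a primal \emph{is} preserved --- the image of a primal under a bijection is again a primal, as you verify --- so the corollary cannot be established by the standard device of exhibiting two homeomorphic spaces that disagree on the property, and preservation under a class of maps larger than the homeomorphisms does not by itself negate homeomorphism-invariance (cardinality, for instance, is preserved by all bijections yet is a topological property). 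Your resolution --- that the corollary must be read as asserting that primality is a purely set-theoretic notion, carried by topology-agnostic maps between generally non-homeomorphic spaces, rather than an invariant extracted from the topological structure --- is the only reading under which the claim actually follows from Theorem \ref{teo}. This is a more careful account than the paper's one-line assertion, and it pinpoints exactly where the paper's ``obvious'' conceals a genuine imprecision.
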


\begin{remark}\label{R}  For a function $f:X\to Y$ and a primal $\mathcal{Q}$ on $Y,$ the family $\mathcal{P} =\{f^{-1}(Q) | Q\in \mathcal{Q}\}$ need not be a primal on $X$ as shown by the following example.
\end{remark}

\begin{example}
Consider $X=\{a,b\},$ $Y=\{1,2\}$ and $\mathcal{Q}=\{\emptyset,\{1\}\}.$ Define the function $f:X\to Y$ by $f(x)=1.$ Then $\mathcal{Q}$ is a primal on $Y$ but $\mathcal{P}=\{f^{-1}(Q) | Q\in \mathcal{Q}\}=\{\emptyset,X\}$ is not a primal on $X.$ 
\end{example}

\section{Conclusion}

This paper introduced primal, which is a dual structure of grill. It is well known that ideal, dual structure of filter, is one of the highly useful notions in  topology \cite{Ja}, summability theory \cite{Bc}, real analysis \cite{Al}, etc. Thus, ideal inspired us to introduce primal. Here, we introduced two  new operators using primal. One  of these two operators satisfies Kuratowski's closure axioms. Moreover, we introduce a topology named primal topology ($\tau_\mathcal{P}$), which is finer than any topology $\tau$ of primal topological space $(X,\tau, \mathcal{P})$. Later, we provided structure of base for $\tau_\mathcal{P}$ and proved several fundamental results. It is important to note that primal structures and some related results showed quantum behaviours i.e. we could not determine some properties as universally true. For example,  one may refer examples 3.6 and 3.7. Moreover, Corollary \ref{C} and Remark \ref{R} showed some uncertain behaviours of this new structure. Thus, we hope that we should study this new notion more deeply in general topology and other areas. If possible, we are looking forward to connect this notion with some ideas of quantum world \cite{Is} from the perspective of general topology in future. \\

{\bf Conflict of interest:} The authors declare that there is no conflict of interest.

\end{document}